\newcommand{\seqnum}[1]{\href{http://oeis.org/#1}{{#1}}}
\newcommand{\f}{\alpha}
\newcommand{\g}{\beta}
\newcommand{\fx}{f}
\newtheorem{theorem}{Theorem}
\newtheorem{lemma}[theorem]{Lemma}
\newtheorem*{lagrange1}{Lagrange Inversion Theorem, First Form}
\newtheorem*{lagrange2}{Lagrange Inversion Theorem, Second Form}
\newcommand{\half}{\tfrac12}
\renewcommand{\th}{\tfrac32}
\begin{document}
\title[A short proof of the Deutsch-Sagan congruence]
{A short proof of the Deutsch-Sagan congruence for connected noncrossing graphs
}
\author{Ira M. Gessel$^*$}
\address{Department of Mathematics\\
   Brandeis University\\
   Waltham, MA 02453}
\email{gessel@brandeis.edu}
%\urladdr{http://www.brandeis.edu/mywebpage}
\date{March 29, 2014}
\thanks{$^*$This work was partially supported by a grant from the Simons Foundation (\#229238 to Ira Gessel).}
%\begin{abstract}
%Abstract goes here.
%\end{abstract}

\maketitle

\bigskip

%\section{Introduction}

\section{Introduction}

 Let $N_n$ be the number of connected noncrossing graphs on $n$ vertices. Flajolet and Noy \cite{f-n} showed that for $n\ge2$,
\begin{equation}
\label{e-N}
N_n = \frac{1}{n-1}\sum_{i=n-1}^{2n-3}\binom{3n-3}{n+i}\binom{i-1}{i-n+1}.
\end{equation}
These numbers are sequence \seqnum{A007297} of the On-Line Encyclopedia of Integer Sequences \cite{oeis}.
Here is a table of small values of $N_n$.
\[\vbox{\halign{\ \hfil\strut$#$\hfil\ \vrule&&\hfil\ $#$\ \hfil\cr
n&1&2&3&4&5&6&7&8&9\cr
\noalign{\hrule}
N_n&1&1&4&23&156&1162&9192&75819&644908\cr}}
\]

Deutsch and Sagan \cite{ds} conjectured that 
%\begin{equation}
%\label{e-DS}
%N_n\equiv
%\begin{cases} 1\!\pmod 3,&\text{if $n=3^i$ or $n=2\cdot 3^i$ for some $i$},\\
%2\!\pmod 3,&\text{if $n=3^i+3^j$ for two distinct $i$ and  $j$},\\
%0\!\pmod 3, &\text{otherwise}.
%\end{cases}
%\end{equation}
\begin{equation}
\label{e-DS}
N_n\equiv
\begin{cases} 1\!\pmod 3,&\text{if $n$ is a power of 3 or twice a power of 3},\\
2\!\pmod 3,&\text{if $n$ is a sum of two distinct powers of 3},\\
0\!\pmod 3, &\text{otherwise}.
\end{cases}
\end{equation}

They noted that the first two cases are not hard to prove using Lucas's theorem for the residue of a binomial coefficient modulo a prime. A complicated proof of the Deutsch-Sagan conjecture was given by Eu, Liu, and Yeh \cite{ely}.

Here we give a simpler proof of Deutsch and Sagan's conjecture using Lagrange inversion.
We then discuss some numbers related to the $N_n$ that arose in Eu, Liu, and Yeh's proof, given by sums similar to \eqref{e-N} and then we show how these sums can be evaluated explicitly. 

\section{Proof of the congruence}
\label{s-cong}

We start by representing $N_n$ as a coefficient of a power series. We use the notation $[x^m] u(x)$ to denote the coefficient of $u(x)$ in the power series $u(x)$.
\begin{lemma}
\label{l-1}
For $n\ge1$, 
\begin{equation}
\label{e-N1}
N_{n+1}=\frac{1}{n}[x^{n-1}]\frac{1}{(1-x)^{n+2}(1-2x)^{n}}.
\end{equation}
\end{lemma}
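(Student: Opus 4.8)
The plan is to start from the Flajolet--Noy formula \eqref{e-N}. Replacing $n$ by $n+1$ there gives
\begin{equation*}
N_{n+1}=\frac1n\sum_{i=n}^{2n-1}\binom{3n}{n+1+i}\binom{i-1}{i-n},
\end{equation*}
and after substituting $j=i-n$ and applying the symmetries $\binom{3n}{2n+1+j}=\binom{3n}{n-1-j}$ and $\binom{n+j-1}{j}=\binom{n+j-1}{n-1}$, the sum becomes $\sum_{j=0}^{n-1}\binom{3n}{n-1-j}\binom{n+j-1}{n-1}$. Reading $\binom{3n}{n-1-j}=[x^{n-1-j}](1+x)^{3n}$ and $\binom{n+j-1}{n-1}=[x^{j}](1-x)^{-n}$, I recognize this as a Cauchy product, so that
\begin{equation*}
N_{n+1}=\frac1n[x^{n-1}]\frac{(1+x)^{3n}}{(1-x)^{n}}=\frac1n[x^{n-1}]\left(\frac{(1+x)^3}{1-x}\right)^{n}.
\end{equation*}

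Next I would convert both this expression and the target on the right of \eqref{e-N1} into coefficients of implicitly defined power series via Lagrange inversion. Writing $\phi(x)=(1+x)^3/(1-x)$ and letting $Y=Y(x)$ be the power series with $Y=x\,\phi(Y)$, the First Form gives $[x^{n-1}]\phi(x)^n=n[x^n]Y$, so $N_{n+1}=[x^n]Y$. For the right-hand side of \eqref{e-N1} I factor
\begin{equation*}
\frac{1}{(1-x)^{n+2}(1-2x)^{n}}=\frac1{(1-x)^2}\left(\frac{1}{(1-x)(1-2x)}\right)^{n},
\end{equation*}
set $g(x)=1/\bigl((1-x)(1-2x)\bigr)$ and $H(x)=x/(1-x)$ so that $H'(x)=1/(1-x)^2$, and apply the First Form again in the shape $[x^{n-1}]H'(x)g(x)^n=n[x^n]H(Z)$, where $Z=x\,g(Z)$. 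This identifies the right-hand side of \eqref{e-N1} with $[x^n]\,Z/(1-Z)$.

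It then remains to prove that $Y=Z/(1-Z)$, which is the heart of the argument. I would verify this by substituting $Z=Y/(1+Y)$ into $Z=x/\bigl((1-Z)(1-2Z)\bigr)$: since $1-Z=1/(1+Y)$ and $1-2Z=(1-Y)/(1+Y)$, the equation collapses to $Y(1-Y)=x(1+Y)^3$, which is exactly the defining equation $Y=x\,\phi(Y)$. Because each of $Y$ and $Z$ is the unique power series solution of its functional equation that vanishes at $x=0$, this forces $Y=Z/(1-Z)$, hence $[x^n]Y=[x^n]\,Z/(1-Z)$ and the two coefficient expressions coincide.

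The main obstacle I anticipate is the second step: one must apply Lagrange inversion in the general form involving $H'$ in order to absorb the stray factor $(1-x)^{-2}$ on the right-hand side, and one must discover the correct M\"obius-type substitution $Z=Y/(1+Y)$ linking the two implicitly defined series. Once that substitution is guessed, its verification is a short rational-function computation, and the remainder of the proof is routine reindexing and bookkeeping.
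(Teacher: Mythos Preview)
Your argument is correct, but it takes a considerably more elaborate route than the paper does. After the same reindexing to reach $N_{n+1}=\frac1n\sum_{j=0}^{n-1}\binom{3n}{n-1-j}\binom{n+j-1}{j}$, the paper simply expands the right-hand side of \eqref{e-N1} directly: writing $1-2x=(1-x)\bigl(1-\frac{x}{1-x}\bigr)$ gives $(1-x)^{-n-2}(1-2x)^{-n}=(1-x)^{-2n-2}\bigl(1-\frac{x}{1-x}\bigr)^{-n}$, and expanding both factors as binomial series immediately yields $[x^{n-1}]=\sum_{j}\binom{n+j-1}{j}\binom{3n}{n-1-j}$, which is $nN_{n+1}$. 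No Lagrange inversion is needed for this lemma.

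Your approach instead passes through the alternative representation $N_{n+1}=\frac1n[x^{n-1}]\bigl((1+x)^3/(1-x)\bigr)^n$, applies Lagrange inversion twice to convert both sides into coefficients of implicitly defined series $Y$ and $Z/(1-Z)$, and then verifies the M\"obius relation $Z=Y/(1+Y)$ between the two functional equations. This is valid and yields the extra byproducts $N_{n+1}=[x^n]Y$ and the explicit link between $Y=x(1+Y)^3/(1-Y)$ and $Z=x/\bigl((1-Z)(1-2Z)\bigr)$, which is pleasant but unnecessary here. The paper's direct expansion is shorter and more elementary; your method is heavier machinery for this particular lemma, though it anticipates the Lagrange-inversion framework the paper deploys only in the proof of Theorem~\ref{t-main}.
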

\begin{proof}
We rewrite \eqref{e-N} as 
\begin{equation}
\label{e-N2}
N_{n+1}=\frac{1}{n} \sum_{k=0}^{n-1} \binom{3n}{n-1-k}\binom{n+k-1}{k}.
\end{equation}
We have
\[1-2x= (1-x)\left(1-\frac{x}{1-x}\right),\]
so
\begin{align*}
\frac{1}{(1-x)^{n+2}(1-2x)^{n}}
  &=(1-x)^{-2n-2}\left(1-\frac{x}{1-x}\right)^{-n}\\
  &=\sum_{j=0}^\infty \binom{n+j-1}{j} \frac{x^j}{(1-x)^{2n+2+j}}\\
  &=\sum_{j=0}^\infty \binom{n+j-1}{j}\sum_{k=0}^\infty 
  \binom{2n+1+j+k}{k} x^{j+k}.
\end{align*}
Then the coefficient of $x^{n-1}$ is 
\begin{equation*}
\sum_{j=0}^{n-1} \binom{n+j-1}{j} \binom{3n}{n-1-j},
\end{equation*}
which by \eqref{e-N2} is $nN_{n+1}$.
\end{proof}

Next, recall the following form of Lagrange inversion \cite[p.~42, equation (5.65)]{ec2}.
\begin{lagrange1}
Let $G(t)$ be a formal power series and let $f=f(x)$ be the unique formal power series satisfying $f=xG(f)$. Then for any formal power series $\Phi(x)$,
\begin{equation*}
[x^n] \Phi( f) = \frac{1}{n} [x^{n-1}] \Phi'(x)G(x)^n.
\end{equation*}
\end{lagrange1}

The Deutsch-Sagan conjecture is an immediate consequence of the following result.
\begin{theorem}
\label{t-main}
Let $F=\displaystyle\sum_{m=0}^\infty x^{3^m}$. Then 
\begin{equation*}
\sum_{n=1}^\infty N_n x^n \equiv F+F^2 \pmod3.
\end{equation*}
\end{theorem}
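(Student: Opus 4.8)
The plan is to recognize the right-hand side of \eqref{e-N1} as exactly the output of the First Form of Lagrange inversion, extract a closed form for $\sum_n N_n x^n$ in terms of a single algebraic series $f$, and then reduce that closed form modulo $3$.

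First I would match \eqref{e-N1} against $\tfrac1n[x^{n-1}]\Phi'(x)G(x)^n$. Writing the denominator as $(1-x)^{n+2}(1-2x)^n=(1-x)^2\bigl[(1-x)(1-2x)\bigr]^n$ suggests taking $G(x)=1/\bigl[(1-x)(1-2x)\bigr]$ and $\Phi(x)=x/(1-x)$, so that $\Phi'(x)=1/(1-x)^2$ and $\Phi'(x)G(x)^n$ is precisely the series appearing in \eqref{e-N1}. Lagrange inversion then gives $N_{n+1}=[x^n]\,f/(1-f)$ for $n\ge1$, where $f$ is the unique series with $f=xG(f)$, equivalently $x=f(1-f)(1-2f)$.

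Next I would assemble the generating function. Since $\Phi(f)=f/(1-f)$ has zero constant term and $[x^n]\,f/(1-f)=N_{n+1}$ for $n\ge1$, I get $\sum_{n\ge1}N_{n+1}x^n = f/(1-f)$, hence $\sum_{m\ge1}N_m x^m = N_1 x + x\cdot f/(1-f) = x + xf/(1-f)$. The key simplification is to eliminate $x$ using $x=f(1-f)(1-2f)$: this collapses $xf/(1-f)$ to $f^2(1-2f)$ and the whole sum to $(1-2f)\bigl(f-f^2+f^2\bigr)=f-2f^2$. Thus $\sum_{m\ge1}N_m x^m = f-2f^2$ exactly.

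Finally I would pass to $\bmod 3$. There $f-2f^2\equiv f+f^2$, and the defining relation becomes $x\equiv f(1-f)(1+f)=f-f^3$, i.e. $f\equiv x+f^3\pmod3$. I claim $F=\sum_{m\ge0}x^{3^m}$ satisfies the same relation: since cubing is additive in characteristic $3$, $F^3\equiv\sum_{m\ge0}x^{3^{m+1}}=F-x$, so $F\equiv x+F^3$. As the equation $g=x+g^3$ determines its solution with $g(0)=0$ coefficient by coefficient (the cube contributes only lower-degree terms to each $[x^n]$), uniqueness forces $f\equiv F\pmod3$, whence $\sum_m N_m x^m\equiv F+F^2\pmod3$. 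I expect the only real work to be the algebraic identity turning $x+xf/(1-f)$ into $f-2f^2$; once that closed form is in hand, the modular step is short.
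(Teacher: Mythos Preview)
Your proposal is correct and follows essentially the same route as the paper. Both apply the First Form of Lagrange inversion with $G(x)=1/\bigl((1-x)(1-2x)\bigr)$ and $\Phi'(x)=1/(1-x)^2$ to obtain the exact identity $\sum_{n\ge1}N_n x^n=f-2f^2$, then reduce modulo~$3$; the only cosmetic differences are that the paper takes $\Phi(x)=1/(1-x)$ rather than $x/(1-x)$, and it establishes $f\equiv F\pmod3$ by iterating $f(x)\equiv x+f(x^3)$ rather than by your uniqueness argument for $g=x+g^3$.
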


\begin{proof}
We apply the Lagrange inversion theorem with $G(x) =1/(1-x)(1-2x)$ and $\Phi(x) = 1/(1-x)$, so that $\Phi'(x) = 1/(1-x)^2$. Then by Lemma \ref{l-1}, together with the fact that $N_1=1$, we have
\begin{equation}
\label{e-N3}
\sum_{n=0}^\infty N_{n+1}x^n = \frac{1}{1-\f}
\end{equation}
where $\f$ is the unique formal power series satisfying
\begin{equation}
\label{e-f1}
\f=\frac{x}{(1-\f)(1-2\f)}.
\end{equation}
By \eqref{e-f1}, $\f-3\f^2 +2\f^3 =x$, so we have
\(
\f(x) \equiv x-2\f(x)^3 \equiv x+\f(x)^3 \equiv x+\f(x^3)\pmod 3
\).
Iterating this congruence gives
\begin{equation}
\label{e-mod3}
\f(x) \equiv x+x^3+\f(x^{27})\equiv \cdots \equiv \sum_{m=0}^\infty x^{3^m} \pmod 3.
\end{equation}By \eqref{e-f1}, 
\begin{equation*}
\frac{1}{1-\f} = x^{-1}(\f - 2\f^2) \equiv x^{-1} (\f+\f^2),
\end{equation*}
so by \eqref{e-N3}, 
\begin{equation}
\label{e-2f}
\sum_{n=1}^\infty N_n x^n \equiv \f+\f^2 \pmod3.
\end{equation}
Then Deutsch and Sagan's congruence \eqref{e-DS} follows directly from \eqref{e-2f} and \eqref{e-mod3}.
\end{proof}

\section{Eu, Liu, and Yeh's congruences}
\label{s-elycong}
In their proof of the Deutsch-Sagan conjecture, Eu, Liu, and Yeh \cite[Lemmas 1--4]{ely} found the residues modulo 3 for four auxiliary sequences which they define by 
\begin{align*}
f_1(n) &= \sum_i \binom{3n+1}{n+i+1}\binom in\\
f_2(n) &=\sum_i \binom{3n}{n+i+1}\binom in\\
f_3(n) &=\sum_i\binom{3n}{n+i}\binom in\\
f_4(n) &=\sum_i\binom{3n-1}{n+i+1}\binom i{n-1},
\end{align*}
with $f_4(0)=0$.
We will also consider a fifth sum 
\begin{equation*}
f_5(n) = \sum_i \binom{3n}{n+i+1}\binom{i}{n-1},
\end{equation*}
with $f_5(0)=1$.

The first few values of these sums are as follows:
\[
\offinterlineskip\vbox{\halign{\ \hfil\strut$#$\hfil\ \vrule&&\hfil\ $#$\ \cr
n&0&1&2&3&4&5&6&7&8\cr
\noalign{\hrule}
f_1(n)\strut&1&6&48&420&3840&36036&344064&3325608&32440320\cr
f_2(n)&0&1&9&82&765&7266&69930&679764&6659037\cr
f_3(n)&1&5&39&338&3075&28770&274134&2645844&25781283\cr
f_4(n)&0&1&7&58&515&4746&44758&428772&4154403\cr
f_5(n)&1&4&30&256&2310&21504&204204&1966080&19122246\cr
}}\]

Eu, Liu, and Yeh noted that $f_1(n) = f_2(n) + f_3(n)$ and that $f_2(n)$ is the number of edges in all noncrossing connected graphs on $n+1$ vertices for $n\ge1$ (sequence \seqnum{A045741}). The sequence $f_5(n)$ is sequence \seqnum{A091527} in the OEIS, and $f_1(n)$, $f_3(n)$, and $f_4(n)$ do not currently appear in the OEIS.

To derive Eu, Liu, and Yeh's congruences by our method, we consider the more general sequence $h_{j,k,l}(n)$, where  $j$, $k$, $l$, and $n$ are arbitrary integers, defined by
\begin{equation*}
h_{j,k,l}(n) 
  =\sum_{i=0}^{n+j-k+l}\binom{3n+j}{n+j-k+l-i}\binom{n-l+i}{i}.
\end{equation*}
Then if $3n+j\ge0$ and $n\ge l$, replacing the summation index $i$ with $i-n+l$ gives
\begin{equation*}
h_{j,k,l}(n) = \sum_{i=n-l}^{2n+j-k} \binom{3n+j}{n+i+k}\binom{i}{n-l}.
\end{equation*}
Thus $f_1=h_{1,1,0}$, $f_2=h_{0,1,0}$, $f_3=h_{0,0,0}$, $f_4=h_{-1, 1,1}$, and $f_5=h_{0,1,1}$.
A straightforward computation, as in the proof of Lemma \ref{l-1}, shows that 
%this is true for all j,k,l,n
\begin{align*}
h_{j,k,l}(n) &= [x^n] \frac{x^{k-l-j}}{(1-x)^{n+k}(1-2x)^{n-l+1}}\\
  &=[x^n] \frac{x^{k-l-j}}{(1-x)^{k}(1-2x)^{-l+1}}\cdot\frac{1}{(1-x)^n(1-2x)^n}
\end{align*}

Now we use the following form of Lagrange inversion (see, e.g., \cite[equation (4.4)]{multi}; a closely related formula is \cite[p.~150, Theorem D]{comtet}): 
\begin{lagrange2}
Let $G(x)$ be a formal power series and let $\fx=\fx(x)$ be the unique formal power series satisfying $\fx=xG(\fx)$. Then for any formal Laurent series $\Psi(x)$ and any integer $n$,
\begin{equation*}
[x^n] \frac{\Psi(\fx)}{1-xG'(\fx)} =  [x^{n}] \Psi(x)G(x)^n.
\end{equation*}
or equivalently
\begin{equation*}
[x^n] \frac{\Psi(\fx)}{1-\fx G'(\fx)/G(\fx)} =  [x^{n}] \Psi(x)G(x)^n.
\end{equation*}
\end{lagrange2}

As in the proof of Theorem \ref{t-main}, we apply this to the equation $\f=xG(\f)$, where $G(x)=1/(1-x)(1-2x)$. Then 
\begin{equation*}
\frac{1}{1-\f G'(\f)/G(\f)}=\frac{(1-\f)(1-2\f)}{1-6\f+6\f^2}.
\end{equation*}
Now let 
\begin{equation*}
H_{j,k,l}=\sum_{n=k-j-l}^\infty h_{j,k,l}(n) x^n.
\end{equation*}

%\begin{align*}
% ?? \frac{(1-2\f)^l \f^{k-j-l}}{(1-6\f+6\f^2)(1-\f)^{k-1}}.
%\end{align*}

Then taking 
\begin{equation*}
\Psi(x) = \frac{x^{k-j-l}}{(1-x)^{k}(1-2x)^{-l+1}}
\end{equation*} 
%in the second form of Lagrange inversion, 
we obtain 
\begin{equation}
\label{e-Hjkl}
H_{j,k,l}
  =\frac{(1-2\f)^l \f^{k-j-l}}{(1-6\f+6\f^2)(1-\f)^{k-1}},%\\
  %&=\frac{(1-2\f)^{l+k} \f^{2k-l-j}}{(1-6\f+6\f^2)x^k},
\end{equation}
%where by \eqref{e-f1} we have replace $1-a$ with $x/a(1-2a)$.
and thus
\begin{equation}
\label{e-gjkl}
H_{j,k,l}
  \equiv(1+\f)^{l}(1-\f)^{1-k} \f^{k-j-l} \pmod 3.
\end{equation}
Now let $F_m=\sum_{n=0}^\infty f_m(n)x^n$. Then  by \eqref{e-gjkl} we have 
\begin{equation}
\label{e-F1-5}
\begin{gathered}
F_1 =H_{1,1,0}
\equiv  1\pmod 3\\
F_2 =H_{0,1,0}
 \equiv \f \pmod 3\\
F_3 =H_{0,0,0}
 \equiv 1-\f \pmod 3\\
F_4 =H_{-1,1,1}
 \equiv  \f+\f^2 \pmod 3\\
F_5 = H_{0,1,1}
  \equiv  1+\f\pmod 3.
\end{gathered}
\end{equation}
Then Eu, Liu, and Yeh's congruences follow immediately from these congruences and \eqref{e-mod3}. We don't state Eu, Liu, and Yeh's congruences here since they are  easy to read off from the congruences in \eqref{e-F1-5} and \eqref{e-mod3}.

\section{Evaluation of the sums}

There are simple explicit formulas for the sums $f_1$, $f_2$, $f_3$,  $f_4$, and $f_5$ (and there is a similar formula for $N_n$ that we will give in section \ref{s-more}), though these formulas don't seem to yield simpler proofs for the congruences than the proofs we have already given.

\begin{theorem}
\label{t-f}
The sequences $f_i(n)$ for $i$ from 1 to 5 are given by the following explicit formulas:
\begin{align*}
f_1(n) &= 2^{2n}\binom{\tfrac32 n}{n}\\
%f_1(2m) &= 2^{4m}\binom{3m}{2m}\\
%f_1(2m+1) &= 6 \frac{(6m+1)!\, m!}{(2m)!\,(2m+1)!\,(3m)!}\\
f_2(n) &= 2^{2n-1}\binom{\tfrac32 n}{n} - 2^{2n-1}\binom{\tfrac32n -\tfrac12}{n}\\
f_3(n) &= 2^{2n-1}\binom{\tfrac32 n}{n} + 2^{2n-1}\binom{\tfrac32n -\tfrac12}{n}\\
%\mbox{---} f_4(n) &=- (2^{2n-1}/3)\binom{\tfrac32n}{n} +2^{2n-1}\binom{\tfrac32n -\tfrac12}{n}\\
f_4(n) &=- \frac{2^{2n-1}}{3}\binom{\tfrac32n}{n} +2^{2n-1}\binom{\tfrac32n -\tfrac12}{n}, \text{\textnormal{ for $n>0$}}\\
%\mbox{---} f_4(n) &=- \tfrac13 2^{2n-1}\binom{\tfrac32n}{n} +2^{2n-1}\binom{\tfrac32n -\tfrac12}{n}\\
%\mbox{---} f_4(n) &=- \frac13 2^{2n-1}\binom{\tfrac32n}{n} +2^{2n-1}\binom{\tfrac32n -\tfrac12}{n}, 
%   \text{\textnormal{ for $n>0$}}\\
%f_4(n) &=- \tfrac13 2^{2n-1}\binom{\tfrac32n}{n} +2^{2n-1}\binom{\tfrac32n -\tfrac12}{n}
f_5(n)&= 2^{2n}\binom{\tfrac32 n -\half}{n}.
\end{align*}
\end{theorem}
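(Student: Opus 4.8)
The plan is to derive generating functions for each $f_i$ in closed form and then extract coefficients using the Lagrange inversion already set up in the paper. Specifically, I would start from the exact (non-modular) expressions in \eqref{e-Hjkl}, which give each $H_{j,k,l}$ as an explicit rational function of the series $\f$ satisfying $\f = x/(1-\f)(1-2\f)$. The key observation is that this generating-function route, unlike the mod-$3$ reductions in Section~\ref{s-elycong}, keeps all the algebraic information, so the exact formulas for $f_i(n)$ should fall out of a single coefficient extraction once I understand the power series $\f$ well enough.

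First I would try to solve explicitly for $\f$, or rather for the quantities appearing in \eqref{e-Hjkl}, in terms of a more tractable series. Since $\f - 3\f^2 + 2\f^3 = x$ factors as $\f(1-\f)(1-2\f) = x$, I expect a useful substitution (for instance, writing $1-2\f$ or $1-6\f+6\f^2$ in terms of a quadratic-type series) that linearizes the problem. The appearance of $\binom{\frac32 n}{n}$ and $\binom{\frac32 n - \frac12}{n}$ in the target formulas strongly suggests that the relevant auxiliary series satisfies an equation of Catalan type $y = 1 + c\,xy^{\th}$ or similar, whose coefficients are exactly generalized central binomial coefficients $\binom{\frac32 n}{n}$ by the Lagrange Inversion Theorem, First Form. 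So I would aim to reexpress each $H_{j,k,l}$ through such a series.

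The cleanest line of attack, however, is to bypass $\f$ altogether and directly apply the Lagrange Inversion Theorem, Second Form, with a different choice of $G$. From
\[
h_{j,k,l}(n) = [x^n]\frac{x^{k-l-j}}{(1-x)^{k}(1-2x)^{-l+1}}\cdot\frac{1}{(1-x)^n(1-2x)^n},
\]
the factor $\bigl((1-x)(1-2x)\bigr)^{-n}$ is exactly $G(x)^n$ for the paper's $G$, so $f_i(n)$ is the coefficient $[x^n]\Psi(x)G(x)^n$ for the appropriate $\Psi$. By the second Lagrange form this equals $[x^n]\Psi(\f)/(1-\f G'(\f)/G(\f))$, recovering \eqref{e-Hjkl}; but I can instead evaluate $[x^n]\Psi(x)G(x)^n$ by yet another Lagrange substitution tuned to produce central binomials. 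Concretely, I would introduce the series $w$ defined by $w = 1 + xw^2$ (the Catalan-type generating function, so $[x^n]w^{2n+r}$ is a central binomial coefficient), relate $G(x)^n = \bigl((1-x)(1-2x)\bigr)^{-n}$ to a power of such a $w$ via the identity the factorization of $1-2x$ suggests, and then read off the two binomial terms $2^{2n}\binom{\frac32 n}{n}$ and $2^{2n}\binom{\frac32 n-\frac12}{n}$ directly. The five formulas would then follow by matching the constants $k,l,j$ for each $f_i$ against the linear combinations displayed in the theorem.

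The main obstacle I anticipate is the half-integer parameter: the binomials $\binom{\frac32 n}{n}$ are not ordinary binomial coefficients appearing in a single $[x^n]G(x)^n$, and isolating them requires finding the exact quadratic or cubic series whose Lagrange inversion yields $\frac32 n$ in the upper index. The factor of $2^{2n}$ and the rational coefficient $-\frac13$ in $f_4$ signal that several such central-binomial series must be combined with care, and keeping track of the shifts $x^{k-l-j}$ and the denominators $(1-\f)^{k-1}$ and $1-6\f+6\f^2$ through the substitution will be where the bookkeeping is genuinely delicate. Once the correct auxiliary series is identified, I expect each of the five evaluations to reduce to a one-line coefficient extraction, with $f_1$ being the simplest (a single central binomial) and $f_4$ requiring the extra rational factor to emerge from the denominator $1-6\f+6\f^2$.
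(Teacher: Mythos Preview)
Your proposal sketches a plausible strategy but stops short of the key identification needed to finish. The paper's primary proof does \emph{not} go through the generating functions $H_{j,k,l}$ or the series $\f$ at all: instead it proves a single binomial identity (Lemma~\ref{l-K1}, a terminating case of Kummer's theorem)
\[
\sum_{k=0}^n \binom{2n+a}{n-k}\binom{a+k-1}{k} = 2^{2n}\binom{\tfrac12 a + n-\tfrac12}{n},
\]
which immediately evaluates $f_1$ and $f_5$ upon matching parameters ($a=n+1$ and $a=n$, respectively). The remaining $f_2, f_3, f_4$ are then obtained from elementary linear relations among the \emph{summands} themselves (e.g.\ $T_2+T_3=T_1$ and $T_3(n,i)-T_2(n,i-1)=T_5(n,i-1)$), not by any further generating-function work. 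This is considerably more direct than the route you outline.

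Your generating-function approach is closer in spirit to the paper's alternative argument in Section~\ref{s-more}, but there the decisive step you are missing is the specific substitution $\g=\f-\f^2$, which satisfies the much simpler functional equation $\g=x/\sqrt{1-4\g}$; Lagrange inversion applied to \emph{this} equation produces the half-integer binomials $\binom{\frac32 n - r - i}{n-i}$ directly (formula~\eqref{e-a1}), and the $F_m$ are then read off as linear combinations. By contrast, your suggested auxiliary $w=1+xw^2$ is the ordinary Catalan series, whose Lagrange coefficients are of type $\binom{2n+r}{n}$, not $\binom{\frac32 n}{n}$; relating $\bigl((1-x)(1-2x)\bigr)^{-n}$ to powers of such a $w$ does not linearize the problem in the way you hope, and the half-integer upper index will not appear without a square-root-type functional equation like the one for $\g$. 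Until you locate that substitution (or the Kummer identity), the ``one-line coefficient extraction'' you anticipate does not materialize.
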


The evaluation of these sums is based on a binomial coefficient identity that is equivalent to a terminating case of a hypergeometric series evaluation called Kummer's theorem \cite[p.~9, Theorem 2.3]{bailey}. There are many ways to prove this identity; 
we give here a proof using Lagrange inversion. A  short self-contained proof was given by Wildon \cite{wildon}.
%the proof given here is based on the fact that there are two different terminating forms of Kummer's theorem, one of which is very easy to prove.

\begin{lemma}
\label{l-K1}
\begin{equation}
\label{e-K1}
\sum_{k=0}^n \binom{2n+a}{n-k}\binom{a+k-1}{k} = 2^{2n}\binom{\half a + n-\half}{n}
\end{equation}
\end{lemma}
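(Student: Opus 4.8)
The plan is to recognize the left-hand side of \eqref{e-K1} as a single power-series coefficient and then apply the Second Form of Lagrange inversion, exactly in the spirit of the proofs of Lemma~\ref{l-1} and the congruences of Section~\ref{s-elycong}. First I would write $\binom{a+k-1}{k}=[x^k](1-x)^{-a}$ and $\binom{2n+a}{n-k}=[x^{n-k}](1+x)^{2n+a}$, so that the Cauchy product collapses the sum into
\[
\sum_{k=0}^n\binom{2n+a}{n-k}\binom{a+k-1}{k}=[x^n]\,(1+x)^{2n+a}(1-x)^{-a}.
\]
It is convenient to treat $a$ as an indeterminate (or a nonnegative integer), since both sides of \eqref{e-K1} are polynomials in $a$ for each fixed $n$, so agreement for all $a$ follows from agreement on infinitely many values.

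The factor $(1+x)^{2n}$ is precisely what invites Lagrange inversion. I would apply the Second Form with $G(x)=(1+x)^2$, so that $G(x)^n=(1+x)^{2n}$, and with $\Psi(x)=\bigl((1+x)/(1-x)\bigr)^{a}$, so that $\Psi(x)G(x)^n=(1+x)^{2n+a}(1-x)^{-a}$ matches the coefficient above. Letting $w=w(x)$ denote the unique power series satisfying $w=x(1+w)^2$, the theorem gives
\[
\sum_{k=0}^n\binom{2n+a}{n-k}\binom{a+k-1}{k}=[x^n]\frac{\Psi(w)}{1-xG'(w)}.
\]
Since $x=w/(1+w)^2$ and $G'(w)=2(1+w)$, I compute $xG'(w)=2w/(1+w)$, hence $1-xG'(w)=(1-w)/(1+w)$, and therefore
\[
\frac{\Psi(w)}{1-xG'(w)}=\Bigl(\frac{1+w}{1-w}\Bigr)^{a+1}.
\]

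It then remains to evaluate $(1+w)/(1-w)$ in closed form. Solving the quadratic $w=x(1+w)^2$ for the branch with $w(0)=0$ gives $w=\bigl(1-2x-\sqrt{1-4x}\,\bigr)/(2x)$, and writing $s=\sqrt{1-4x}$ with $4x=(1-s)(1+s)$ produces the remarkably clean simplification $(1+w)/(1-w)=s^{-1}=(1-4x)^{-1/2}$. Consequently the sum equals $[x^n](1-4x)^{-(a+1)/2}$, and expanding by the binomial series with $\binom{-(a+1)/2}{n}(-4)^n=2^{2n}\binom{\half a+n-\half}{n}$ yields exactly the right-hand side of \eqref{e-K1}. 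I expect the only real obstacle to be the bookkeeping at the end: selecting the correct branch of the quadratic, simplifying $(1+w)/(1-w)$ cleanly, and checking that the binomial reindexing produces $2^{2n}\binom{\half a+n-\half}{n}$ rather than a sign- or shift-mismatched variant. The genuinely creative step is the choice $G(x)=(1+x)^2$; once it is made, the Second Form does all the work and no residual summation survives.
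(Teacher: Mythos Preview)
Your proof is correct and reaches the same endpoint as the paper---namely $[x^n](1-4x)^{-(a+1)/2}$ via the Second Form of Lagrange inversion---but by a genuinely different route. The paper first realizes the sum as $[x^n]\,(1-x)^{-n-1}(1-2x)^{-a}$ (using the substitution $1-2x=(1-x)\bigl(1-\tfrac{x}{1-x}\bigr)$ as in Lemma~\ref{l-1}), then applies Lagrange inversion with $G(x)=1/(1-x)$ and $\Psi(x)=(1-x)^{-1}(1-2x)^{-a}$; the functional equation $f=x/(1-f)$ has solution $f=\bigl(1-\sqrt{1-4x}\,\bigr)/2$, so $1-2f=\sqrt{1-4x}$ and the result drops out immediately. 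You instead write the sum as $[x^n]\,(1+x)^{2n+a}(1-x)^{-a}$ and take $G(x)=(1+x)^2$, which leads to a different functional equation and requires the extra simplification $(1+w)/(1-w)=(1-4x)^{-1/2}$. Your coefficient extraction is arguably more transparent (no $1-2x=(1-x)(1-\tfrac{x}{1-x})$ trick), while the paper's choice of $G$ makes the algebraic endgame cleaner and also echoes the $G(x)=1/\bigl((1-x)(1-2x)\bigr)$ used throughout Sections~\ref{s-cong}--\ref{s-elycong}. Either way the creative content is the same: recognize the $n$-dependence inside the coefficient and absorb it via Lagrange inversion.
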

%\begin{proof}
%We first prove the identity for $a=0,-1,\dots, -2n$. Since both sides are polynomials in $a$ of degree at most $n$, this will show that they are equal in general. Let $a=-m$, where $m\in\{0, 1,\dots, 2n\}$. Then for $0\le k\le n$ we have
%\begin{align*}
%\binom{2n+a}{n-k}\binom{a+k-1}{k}&=(-1)^k \binom{2n-m}{n-k}\binom{m}{k}\\
%  &= \frac{(2n-m)!\,m!}{n!^2} (-1)^k \binom nk \binom n{m-k}.
%\end{align*}
%Now $\sum_{k=0}^n (-1)^k \binom nk \binom n{m-k}$ is the coefficient of $x^m$ in 
%$(1-x)^n(1+x)^n = (1-x^2)^n$, so it is 
%\begin{equation}
%\label{e-Ku}
%\begin{cases}
%(-1)^{m/2}\binom{n}{m/2},&\text{if $m$ is even,}\\
%\qquad \ 0,&\text{if $m$ is odd.}
%\end{cases}
%\end{equation}
%It is clear that if $a=-1, -3, \dots, -2n+1$ then the right side of \eqref{e-K1} is 0. So taking $a=-m=-2l$, it is sufficient to show that for $l=0,\dots ,n$  we have
%\begin{equation}
%\label{e-K2}
%2^{2n} \binom{-l+n-\half}{n} = \frac{(2n-2l)!\, (2l)!}{n!^2} (-1)^l \binom{n}{l}.
%\end{equation}
%Using the double factorial notation $(2j-1)!! = (2j-1) (2j-3)\cdots 1 = (2j)!/(2^j j!)$, we have
%that $n!$ times the left side of \eqref{e-K2} is 
%\begin{align*}
%2^n(2n-2l-1)(2n-2l-3)\cdots 1 &\cdot (-1)(-3) \cdots (2l-1) \\
%  &= 2^n(-1)^l (2n-2l-1)!!\, (2l-1)!!\\
%  &=2^n(-1)^l \frac{(2n-2l)!}{2^{n-l} (n-l)!} \frac{(2l)!}{2^l l!}\\
%  &=\frac{(2n-2l)!\, (2l)!}{n!} (-1)^l \binom{n}{l},
%\end{align*}
%as required.
%\end{proof}

\begin{proof}
Let $S$ be the  left side of \eqref{e-K1}. Then  $S$ is equal to the coefficient of $x^n$ in $1/(1-x)^{n+1}(1-2x)^a$ since
\begin{align*}
\frac{1}{(1-x)^{n+1}(1-2x)^a} &= \frac{1} {\displaystyle(1-x)^{n+a+1} \left( 1-\frac{x}{1-x}\right)^a}\\
  &= \sum_{k}  \frac{x^k}{(1-x)^{n+a+k+1}}\binom{a+k-1}{k}\\
  &=\sum_{j,k} x^{j+k}\binom{n+a+j+k}{j}  \binom{a+k-1}{k}\\
  &=\sum_m x^m \sum_k \binom{n+a+m}{m-k}\binom{a+k-1}{k}.
\end{align*}
Let us take $G(x) = 1/(1-x)$ and $\Psi(x) = 1/(1-x)(1-2x)^a$ in the second form of Lagrange inversion. 
The solution of $f=xG(f)$ is 
\begin{equation*}
f=\frac{1-\sqrt{1-4x}}{2}
\end{equation*}
and  $\Psi(x)/\bigl(1-xG'(x)/G(x)\bigr) = (1-2x)^{-a-1}$, so 
\begin{align*}
S&=[x^n]\frac{1}{(1-2f)^{a+1}}=[x^n] \frac 1{\bigl(\sqrt{1-4x}\,\bigr)^{a+1}} \\
  &=(-4)^n \binom {-(a+1)/2}{n}
  = 2^{2n}\binom{\half a + n-\half}{n}.\qedhere
\end{align*}
\end{proof}

{\allowdisplaybreaks
We can now prove Theorem \ref{t-f}. 
Let 
\begin{align*}
T_1(n,i)&=\binom{3n+1}{n+i+1}\binom in\\
T_2(n,i)&=\binom{3n}{n+i+1}\binom in\\
T_3(n,i)&=\binom{3n}{n+i}\binom in\\
T_4(n,i)&=\binom{3n-1}{n+i+1}\binom i{n-1}\\
%\gray{T_5(n,i)}&\gray{=\binom{3n}{n+i}\binom{i-1}{n-1}, }\\
T_5(n,i)&=\binom{3n}{n+i+1}\binom{i}{n-1},
\end{align*}
}% end of \allowdisplaybreaks
so that for $m=1,\dots, 5$ we have $f_m(n) = \sum_i T_m(n,i)$.
Then by Lemma \ref{l-K1} we have
\begin{gather*}
 f_1(n) = \sum_i T_1(n,i) = 2^{2n}\binom{\tfrac32 n}{n}  \\
f_5(n)=\sum_i T_5(n,i) = 2^{2n}\binom{\tfrac32 n -\half}{n}. 
\end{gather*}
Also, it is easy to check that $T_2(n,i) +T_3(n,i) = T_1(n,i)$, as noted in \cite{ely}, 
that $T_3(n,i) -T_2(n,i-1) = T_5(n,i-1)$ for $n>0$, and that
$T_4(n,i) = \tfrac 23 T_5(n,i) -\tfrac 13T_3(n, i+1)$. Thus $f_2(n) + f_3(n) = f_1(n)$,  $f_3(n) - f_2(n) = f_5(n)$, and $f_4(n) = \tfrac23 f_5(n) - \tfrac13 f_3(n)$ for $n\ge1$. We can then solve for $f_2$, $f_3$ and $f_4$ in terms of $f_1$ and $f_5$, and we can check that the formulas given in Theorem \ref{t-f} also hold for $f_m(0)$ if $m\ne 4$. \qed

%\comment{Probably delete the following}
%
%\gray{
%We also should prove \comment{maybe remove the $2^{2n}$s?}
%\begin{theorem}
%\begin{align*}
%2^{2n}\binom{\tfrac32 n}{n}&\equiv 0\! \pmod 3, \text{ for $n>0$} \\
%\frac{2^{2n}}{3}\binom{\tfrac32 n}{n}&\equiv
%\begin{cases} 2\!\pmod 3,&\text{if $n$ is a power of $3$ or a sum of two distinct powers of $3$}, \comment{this line is too long}\\
%1\!\pmod 3,&\text{if $n$ is twice a power of $3$},\\
%0\!\pmod 3, &\text{otherwise}.
%\end{cases}\\
%2^{2n}\binom{\tfrac32 n -\tfrac12}{n}&\equiv
%\begin{cases}
%1\!\pmod 3,&\text{if $n$ is a power of $3$},\\
%0\!\pmod 3,&\text{otherwise}
%\end{cases}
%\end{align*}
%\end{theorem}

%Note that $\binom{\frac 32 n}{n} = \frac32 \binom{\frac32 n-1}{n-1}$.

%}%end gray

\section{More Lagrange inversion}
\label{s-more}
We can also prove Theorem \ref{t-f}, and derive a related formula for $N_n$, by Lagrange inversion.

Let us define the power series $\g$ in $x$  by 
\begin{equation}
\label{e-g1}
\g = \frac{x}{\sqrt{1-4\g}}.
\end{equation}
If we  define the power series $\f=\f(x)$ by $\g=\f-\f^2$ in \eqref{e-g1} (together with the condition $\f(0)=0$) we see that $\f$ satisfies
\begin{equation*}
\f-\f^2 = \frac{x}{1-2\f},
\end{equation*}
so $\f=x/(1-\f)(1-2\f)$ and thus this $\f$ is the same power series as the $\f$ discussed in sections \ref{s-cong} and \ref{s-elycong}.
Applying the second form of Lagrange inversion, we have for any power series $\Psi(x)$,
\begin{equation*}
[x^n] \frac{1-4\g}{1-6\g} \Psi(\g) = [x^n]\frac{\Psi(x)}{(1-4x)^{n/2}}.
\end{equation*}
Then taking $\Psi(x) = x^i (1-4x)^{r-1}$ gives
\begin{equation*}
\frac{(1-4\g)^{r}}{1-6\g} \g^i 
  =\sum_{n=0}^\infty 2^{2n-2i}(-1)^{n-i}\binom{-\half n+r-1}{n-i}.
\end{equation*}
 Since $\g=\f-\f^2$ and $(-1)^{n-i}\binom{-n/2+r-1}{n-i}=\binom{3n/2-r-i}{n-i}$, we may write this as
\begin{equation}
\label{e-a1}
\frac{(1-2\f)^{2r}(\f-\f^2)^i}{1-6\f+6\f^2}
  =\sum_{n=0}^\infty 2^{2n-2i}\binom{\th n-r-i}{n-i} x^n.
\end{equation}
%\gray{In particular, taking $i=0$ and $r=1/2$ gives
%\begin{equation*}
%\sum_{n=0}^\infty 2^{2n}\binom{\th n-\hf}{n} x^n= \frac{1-2\f}{1-6\f+6\f^2}
%\end{equation*}
%and taking $i=1$ and $r=0$ gives
%\begin{equation*}
%\sum_{n=0}^\infty 2^{2n-2}\binom{\th n-1}{n-1} x^n= \frac{\f-\f^2}{1-6\f+6\f^2}.
%\end{equation*}
%}%end gray
Then in the notation of section \ref{s-elycong}, by \eqref{e-Hjkl} and the formulas for the $F_m$ given in \eqref{e-F1-5}, we have 
\begin{align*}
F_1 &=\frac{1}{1-6\f+6\f^2}\\
F_2 &=\frac{\f}{1-6\f+6\f^2}\\
F_3 &=\frac{1-\f}{1-6\f+6\f^2}\\
F_4 &=\frac{\f-2\f^2}{1-6\f+6\f^2}\\
F_5 &=\frac{1-2\f}{1-6\f+6\f^2}\\
\end{align*}
from which the formulas of Theorem \ref{t-f} can be obtained: $F_1$ and $F_5$ can be evaluated by \eqref{e-a1}, $F_2$ and $F_3$ are linear combinations of $F_1$ and $F_5$, and 
$F_4 = -\frac16 F_1+\frac12 F_5 -\frac13$.
 
Similarly, the first form of Lagrange inversion gives
\begin{equation*}
[x^n] \Phi(\g) = \frac{1}{n} [x^{n-1}] \frac{\Phi'(x)}{(1-4x)^{n/2}}.
\end{equation*}
Let us take $\Phi(x) = (1-4x)^r$. Then  we have 
\begin{equation*}
(1-4\g)^r = 1+ \sum_{n=1}^\infty (-4)^{n}\frac{r}{n}\binom{-n/2+r-1}{n-1}x^n 
   =1-\sum_{n=1}^\infty 2^{2n}\frac{r}{n}\binom{\th n -r-1}{n-1}x^n,
\end{equation*}
so
\begin{equation}
\label{e-2a}
(1-2\f)^{2r}=1-\sum_{n=1}^\infty 2^{2n}\frac{r}{n}\binom{\th n -r-1}{n-1}x^n.
\end{equation}

From \eqref{e-N3} it follows that $\sum_{n=1}^\infty N_{n}x^n = x/(1-\f)$, and by \eqref{e-f1}, 
\begin{equation*}
\frac{x}{1-\f} = \f - 2\f^2 = \half(1-2\f) -\half (1-2\f)^2,
\end{equation*}
so by \eqref{e-2a}, for $n\ge1$ we have
\begin{align}
N_n &= \frac 12 \left[\frac{2^{2n}}{n} \binom{\th n -2}{n-1} -  \frac{2^{2n-1}}{n}\binom{\th n -\th}{n-1}\right]\notag\\
 &= \frac{2^{2n-1}}{n}\binom{\th n -2}{n-1} -\frac{2^{2n-2}}{n}\binom{\th n -\th}{n-1}.
 \label{e-mh}
\end{align}
An equivalent formula was stated by Mark van Hoeij in the OEIS entry for sequence \seqnum{A007297}.
The first term on the right side of \eqref{e-mh}, $({2^{2n-1}}/{n})\binom{3n/2 -2}{n-1}$ is  twice sequence \seqnum{A078531}, and the negative of the second term, $({2^{2n-2}}/{n}) \binom{3n/2-3/2}{n-1}$ is sequence \seqnum{A085614}.
We note also that if $n=2m+1$ then 
\begin{equation*}
 \frac{2^{2n-1}}{n}\binom{\th n -2}{n-1} = 2\frac{m!\,(6m)!}{(2m)!\,(2m+1)!\,(3m)!}
\end{equation*}
and if $n=2m+2$ then 
\begin{equation*}
\frac{2^{2n-2}}{n}\binom{\th n -\th}{n-1}= 6\frac{m!\,(6m+1)!}{(2m)!\,(2m+2)!\,(3m)!}.
\end{equation*}

\end{document}